\theoremstyle{plain}
\newtheorem{lem}{Lemma}
\newtheorem{prop}[lem]{Proposition}
\newtheorem{thm}[lem]{Theorem}
\newtheorem*{thm*}{Theorem}
\newtheorem{cor}[lem]{Corollary}
\newtheorem*{cor*}{Corollary}
\theoremstyle{definition}
\newtheorem*{defn*}{Definition}
\newtheorem*{ex*}{Example}
\newtheorem*{rem*}{Remark}
\theoremstyle{remark}
\DeclareMathOperator{\inter}{int}
\DeclareMathOperator{\diam}{diam}
\DeclareMathOperator{\length}{length}
\newcommand{\C}{\mathbb C}
\newcommand{\R}{\mathbb R}
\newcommand{\Z}{\mathbb Z}
\newcommand{\B}{\mathcal B}
\newcommand{\LL}{\mathcal L}
\newcommand{\bd}{\partial}
\renewcommand{\Re}{\textup{Re}}
\begin{document}

\title[Hyperbolic dimension of Julia sets]{Hyperbolic dimension of
Julia sets of meromorphic maps with logarithmic tracts}

\date{\today}

\author{Krzysztof Bara\'nski}
\address{Institute of Mathematics, University of Warsaw,
ul.~Banacha~2, 02-097 Warszawa, Poland}
\email{baranski@mimuw.edu.pl}

\author{Bogus{\l}awa Karpi\'nska}
\address{Faculty of Mathematics and Information Science, Warsaw
University of Technology, Pl.~Politechniki~1, 00-661 Warszawa, Poland}
\email{bkarpin@impan.gov.pl}

\author{Anna Zdunik}
\address{Institute of Mathematics, University of Warsaw,
ul.~Banacha~2, 02-097 Warszawa, Poland}
\email{A.Zdunik@mimuw.edu.pl}

\thanks{Research supported by Polish MNiSW Grant N N201 0234 33 and
EU FP6 Marie Curie ToK Programme SPADE2.}
\subjclass[2000]{Primary 37F10, 37F35, 30D40, 28A80.}

\begin{abstract} We prove that for meromorphic maps with logarithmic
tracts (e.g. entire or meromorphic maps with a finite number of
poles from class $\B$), the Julia set contains a compact invariant hyperbolic
Cantor set of Hausdorff dimension greater than $1$. Hence, the
hyperbolic dimension of the Julia set is greater than $1$.
\end{abstract}

\maketitle

Following \cite{BRS}, we say that an unbounded domain $T \in \C$ with
piecewise smooth boundary and unbounded complement is a logarithmic
tract for a continuous function $f: \overline{T} \to \C$, if there
exists $R_0 > 0$, such that $|f(z)| > R_0$ for every $z \in T$, $|f(z)| =
R_0$ for every $z \in \bd T$ and $f$  on $T$ is a holomorphic universal
covering of $\{z: |z| > R_0\}$. Note that every meromorphic map with
logarithmic singularity over infinity has a logarithmic tract. In
particular, this holds for transcendental meromorphic maps with a
finite number of poles (in particular transcendental entire maps) from
class $\B$. Recall that the class $\B$ consists of maps $f$, for which
the set of singularities of $f^{-1}$ is bounded.

The hyperbolic dimension of the Julia set $J(f)$ is defined
as the supremum of the Hausdorff dimensions (denoted $\dim_H$) of all
conformal expanding
Cantor repellers contained in $J(f)$. Recall that in this setting a
conformal expanding Cantor repeller is a compact invariant Cantor set
$X \subset J(f)$, such that $(f^k)'|_X > 1$ for some $k >
0$. Obviously, the hyperbolic dimension of $J(f)$ is not greater than its
Hausdorff dimension. However, it can be strictly smaller, see \cite{UZ}.

In this note we prove the following.

\begin{thm*} The hyperbolic dimension of the Julia set of a
meromorphic map with a logarithmic tract is greater than $1$. In
particular, the Hausdorff dimension of the set of points with bounded
orbits in the Julia set is greater than $1$.
\end{thm*}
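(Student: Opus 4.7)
\emph{The plan is to} exhibit inside $J(f)$ a conformal iterated function system whose limit set is a hyperbolic Cantor repeller of Hausdorff dimension strictly greater than one. Concretely, I will look for univalent inverse branches $\{g_i\}_{i \in I}$ of some iterate $f^N$, defined on a common disk $V \subset T$, with uniformly bounded distortion and pairwise disjoint images $g_i(V) \subset V$. The limit set of such a system is automatically a compact, $f^N$-invariant, uniformly expanding conformal Cantor set whose orbits stay in $V$, hence contained in $J(f)$; and the Bowen--Moran pressure equation forces $\dim_H > 1$ as soon as $\sum_i |g_i'(w_0)| > 1$ at some $w_0 \in V$. The whole proof reduces to producing such a family of branches.

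\emph{The logarithmic uniformization} of the tract is the principal tool. By the definition of a logarithmic tract there is a biholomorphism $\psi : H \to T$, where $H = \{\Re w > \log R_0\}$, satisfying $f \circ \psi(w) = e^w$. Differentiating gives $f'(\psi(w)) = e^w/\psi'(w)$, so for $z_0 = e^{R+i\theta}$ on a circle $|z| = e^R > R_0$ the preimages of $z_0$ in $T$ under $f$ are the points $\psi(R + i\theta + 2\pi i k)$, $k \in \Z$, and the associated local inverse branches $g_k$ of $f$ satisfy
\[
|g_k'(z_0)| \;=\; \frac{|\psi'(R + i\theta + 2\pi i k)|}{e^R}.
\]
Averaging over $\theta \in [0, 2\pi)$ collapses the family of vertical $2\pi i$-translates into a single integral along the line $\{\Re w = R\}$:
\[
\int_0^{2\pi} \sum_{k \in \Z} |g_k'(e^{R+i\theta})| \, d\theta \;=\; \frac{1}{e^R} \int_{-\infty}^{\infty} |\psi'(R+it)| \, dt \;=\; \frac{\length(\gamma_R)}{e^R},
\]
where $\gamma_R := \psi(\{\Re w = R\})$ is the image of the vertical line.

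\emph{The decisive geometric input} is that $\gamma_R$ is an unbounded curve in $\C$, hence of infinite length. Indeed, if $\gamma_R$ were bounded, then $\psi(R+it)$ would admit an accumulation point $z^* \in \overline{T}$ as $|t| \to \infty$; by injectivity of $\psi$ it cannot lie in $T$, and by continuity of $f$ on $\overline{T}$ with $|f|=R_0$ on $\bd T$ it cannot lie on $\bd T$ either, as this would force the value $e^R > R_0$ to equal $R_0$ along a subsequence. Hence $z^*$ must be $\infty$, so $\length(\gamma_R) = \infty$, the circle average above is infinite, and there exist $z_0 \in \{|z| = e^R\}$ and a finite $S \subset \Z$ with $\sum_{k \in S}|g_k'(z_0)| > 1$ strictly.

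\emph{The main obstacle}, and the final step, is upgrading this pointwise single-step inequality into a bona fide IFS on a fixed domain. The problem is that each $g_k$ sends a neighborhood of $z_0$ near the distinct point $z_k = \psi(R + i\theta + 2\pi i k)$, which typically lies far from $z_0$, so $g_k$ is not a self-map of any disk centered at $z_0$. The remedy is to pass to inverse branches of a high iterate $f^N$: one precomposes each selected $g_k$ with a suitably chosen inverse branch of $f^{N-1}$ that pulls the resulting image back into a fixed small disk $V$ around $z_0$, using the fact that $f^{N-1}$ acts as an expanding covering on large subsets of the tract. Koebe distortion for $\psi$ and for each elementary inverse branch of $\exp$ gives uniform bounded distortion for the composed branches, and applying the averaging estimate above at each intermediate step ensures that the chain-rule product summed over the compositions still exceeds $1$ at $z_0$. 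The resulting conformal IFS on $V \subset T \subset J(f)$ has limit set of $\dim_H > 1$, which is the desired hyperbolic Cantor repeller with bounded orbits, proving both assertions of the theorem.
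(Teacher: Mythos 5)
Your first step---the logarithmic uniformization $\psi$, the circle-average identity
$\int_0^{2\pi}\sum_k|g_k'(e^{R+i\theta})|\,d\theta = \length(\gamma_R)/e^R$,
and the properness argument that $\gamma_R$ is unbounded and hence of infinite length---is correct and is close in spirit to the paper's Lemmas~\ref{lem:F'<} and~\ref{pressure}, both of which also rest on the observation that an unbounded image curve has infinite length. So the ``decisive geometric input'' is the right one.

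The gap is entirely in your last paragraph, which is where the actual theorem lives. Getting $\sum_{k\in S}|g_k'(z_0)|>1$ at one point is easy (in fact, the bound you use on $|\psi'|$ shows the full sum diverges at \emph{every} $\theta$); the hard part is turning this into a genuine conformal IFS, and your sketch does not do that. Two specific problems. First, the branches $g_k$ send a neighborhood of $z_0$ to points $z_k=\psi(R+i\theta_0+2\pi ik)$ scattered along an unbounded curve in $T$, of wildly different moduli. Your fix is to postcompose with ``a suitably chosen inverse branch of $f^{N-1}$'' that pulls $z_k$ back into a fixed disk $V$. But nothing guarantees that such a branch exists (you would need every $z_k$ to lie in the image of $V$ under $f^{N-1}$), nor that the combined map is a self-map of $V$; this is exactly the point where a concrete construction is required. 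Second, the assertion that ``applying the averaging estimate above at each intermediate step ensures that the chain-rule product summed over the compositions still exceeds $1$'' is unjustified: the averaging identity is an integral over the single circle $\{|z|=e^R\}$, whereas the intermediate points $z_k$ do not lie on any common circle, so there is no way to ``apply'' the estimate at them. A generic inverse branch of $f^{N-1}$ is strongly contracting, and without a quantitative lower bound on its derivative the chain-rule product will drop below $1$.

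For comparison, the paper resolves exactly this difficulty by working in the $F$-plane and building a \emph{two-step} IFS on a square $Q$ of side $R$ centered at $R$. The preimages $v_s=F^{-1}_s(R)$ all lie on a single vertical line $\ell$ at real part $r\ll R$ (Corollary~\ref{cor:Re<}); the Eremenko--Lyubich derivative bound (Lemma~\ref{lem:F'>}) forces $\diam(F^{-1}_u\circ F^{-1}_s(Q))<D$, so once $v_{u,s}\in Q'$ the image $Q_{u,s}$ sits inside $Q$; and the two-step derivative sum is bounded below in Lemma~\ref{pressure} by combining the one-step lower bound of Lemma~\ref{lem:F'<} with a length/counting argument for the curves $\ell_u$ crossing $Q''$. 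It is the careful interplay between the two scales $R$ and $r$, together with the choice of $R$ from Lemma~\ref{lem:F'<}, that makes $\sum_{(u,s)\in G}|(F^{-1}_u\circ F^{-1}_s)'(z)|>C_1R^{1-\varepsilon}>1$ hold uniformly on $Q$. Your proposal has no analogue of the ``return to $Q$'' mechanism or of the counting of curves, so as written it does not yield the pressure estimate $P(1)>0$, and the theorem does not follow.
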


It is known that the Julia set of any entire transcendental map
contains non-degenerate continua (see \cite{Ba}) and hence 
$\dim_H(J(f)) \geq 1$. On the other hand, it can be
arbitrarily close to $1$ (see \cite{S1}). 
In \cite{S2}, Stallard proved a remarkable result stating that
the Hausdorff dimension of the Julia set 
of entire maps from the class $\B$ is greater than $1$.
The result was extended by
Rippon and Stallard in \cite{RS} to meromorphic maps with
finitely many poles, and by Bergweiler, Rippon and Stallard in
\cite{BRS} to maps with logarithmic tracts.

The proof of our theorem uses thermodynamic formalism. Note that the
result was proved in \cite{UZ} for the specific family of exponential maps.

Let $f$ be a meromorphic map with a logarithmic tract $T$. It is
convenient to look at $f$ in the logarithmic coordinates, introduced
in \cite{EL}, which are defined as follows. Changing coordinates by
translation, we can assume that
$0 \notin \overline{T}$. Moreover, enlarging $R_0$ in the
definition of the logarithmic tract $T$, we can assume that $\bd T$ is
homeomorphic to a straight line, so that $\overline{T}$ is simply
connected and $\bd T \cup \{\infty\}$ is a Jordan curve in the Riemann
sphere. Since $f$ on
$T$ is a universal cover of $\{z: |z| > R_0\}$, we can lift $f$ by the
branches of logarithm to a map
\[
F: \bigcup_{s \in \Z} \overline{L_s} \to \overline{H},
\]
where $L_s = L_0 + 2 \pi is$ are unbounded simply connected domains in
$\C$ (lifted tracts), such that $\bd L_s \cup \{\infty\}$ is a Jordan
curve and $H = \{z: \Re(z) > \ln R_0\}$. We have
\[
\exp\circ F = f \circ \exp,
\]
in particular, $F$ is periodic with period $2 \pi i$. Note that $F$ is
conformal on each $L_s$ and maps $\overline{L_s}$ homeomorphically
onto $\overline{H}$. Let $F^{-1}_s$ be the inverse branch of $F$
mapping $\overline{H}$ onto $\overline{L_s}$. Note that $F^{-1}_s =
F^{-1}_0 + 2 \pi is$.

\begin{lem} \label{lem:F'<} Let $\varepsilon > 0$. Then there exist
arbitrarily large $x > 0$, such that for every $s \in \Z$,
\[
|(F^{-1}_s)'(x)| > \frac{1}{x^{1 + \varepsilon}}.
\]
\end{lem}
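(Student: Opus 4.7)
My plan is to argue by contradiction, exploiting the fact that $F$ is $2\pi i$-periodic and that the real axis in $H$ escapes to $\infty$. Since $F^{-1}_s = F^{-1}_0 + 2\pi is$, we have $(F^{-1}_s)'(x) = (F^{-1}_0)'(x)$, so the inequality does not depend on $s$ and the lemma reduces to the statement that, for every $\varepsilon > 0$, there exist arbitrarily large $x$ with $|(F^{-1}_0)'(x)| > x^{-(1+\varepsilon)}$.

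Assume, toward a contradiction, that this fails: there exist $\varepsilon > 0$ and $x_0 > \ln R_0$ such that $|(F^{-1}_0)'(x)| \leq x^{-(1+\varepsilon)}$ for all $x \geq x_0$. I would then look at the curve $\gamma := F^{-1}_0([x_0, \infty)) \subset \overline{L_0}$ and estimate its Euclidean length:
\[
\length(\gamma) \;=\; \int_{x_0}^{\infty} |(F^{-1}_0)'(x)|\,dx \;\leq\; \int_{x_0}^{\infty} x^{-(1+\varepsilon)}\,dx \;<\; \infty .
\]
The strategy is to derive a contradiction by showing that $\gamma$ is in fact an unbounded curve in $\C$, whose Euclidean length must then be infinite.

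To obtain the required unboundedness, I would show that $F^{-1}_0(x) \to \infty$ as $x \to \infty$. Suppose otherwise; then some sequence $x_n \to \infty$ satisfies $F^{-1}_0(x_n) \to z_0$ for a finite point $z_0 \in \overline{L_0}$. Since $F$ is continuous on $\overline{L_0}$ as a map into $\overline{H} \subset \C$, this would force
\[
F(z_0) \;=\; \lim_{n\to\infty} F\bigl(F^{-1}_0(x_n)\bigr) \;=\; \lim_{n\to\infty} x_n \;=\; \infty ,
\]
contradicting $F(z_0) \in \C$. Therefore $F^{-1}_0(x) \to \infty$, the curve $\gamma$ is unbounded, its Euclidean length is infinite, and the contradiction closes the argument.

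The only non-routine point I anticipate is precisely the escape statement $F^{-1}_0(x) \to \infty$; once that is in place, the proof is a one-line integration. The escape itself is essentially immediate from the setup, because $F\colon\overline{L_0}\to\overline{H}$ is given as a homeomorphism with $\C$-valued image, so preimages of an escaping sequence cannot accumulate at a finite interior or boundary point of $L_0$.
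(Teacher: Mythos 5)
Your proof is correct and takes essentially the same approach as the paper: the paper likewise observes that $F^{-1}_s(x)\to\infty$ as $x\to+\infty$, so the curve $F^{-1}_s([\ln R_0,+\infty))$ has infinite length and hence $\int_{\ln R_0}^{+\infty}|(F^{-1}_s)'(x)|\,dx$ diverges, which forces the stated lower bound at arbitrarily large $x$. You have merely phrased the argument as a contradiction and filled in the details the paper leaves implicit (the $s$-independence via $F^{-1}_s=F^{-1}_0+2\pi is$, and the short compactness argument showing $F^{-1}_0(x)\to\infty$).
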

\begin{proof} Let $x > 0$. Since $F^{-1}_s(x) \to \infty$ as $x \to
+\infty$, the curve $F^{-1}_s([\ln R_0, +\infty))$ has infinite
length. Hence, the integral
\[
\int_{\ln R_0}^{+\infty} |(F^{-1}_s)'(x)|\; dx
\]
is diverging near infinity, which easily shows the lemma.
\end{proof}

Using the Koebe one-quarter theorem, we  obtain the result showed by Eremenko and Lyubich as Lemma~1 in \cite{EL}.

\begin{lem} \label{lem:F'>} For every $z \in \bigcup_{s \in \Z}
L_s$, we have $|F'(z)| > \frac{1}{4\pi} (\Re(F(z)) - \ln R_0)$.
\qed
\end{lem}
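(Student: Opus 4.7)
The plan is to combine Koebe's one-quarter theorem with the $2\pi i$-periodic structure of the tracts recorded just before the lemma. Fix $z \in L_s$ and set $w := F(z)\in H$, $d := \Re(w)-\ln R_0$; then $d$ is exactly the distance from $w$ to $\bd H$, so the open disk $D(w,d)$ lies in $H$, and the branch $G := F^{-1}_s$ is univalent on $D(w,d)$ with $G(w)=z$. Koebe's one-quarter theorem applied to $G$ on this disk then gives
\[
D\bigl(z,\tfrac{|G'(w)|\,d}{4}\bigr) \;\subset\; G(D(w,d)) \;\subset\; L_s.
\]

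The second step would exploit the identity $F^{-1}_{s+1}=F^{-1}_s+2\pi i$, which yields the same Koebe estimate for the branch $F^{-1}_{s+1}$: a disk
\[
D\bigl(z+2\pi i,\tfrac{|G'(w)|\,d}{4}\bigr) \;\subset\; L_{s+1}
\]
of the same radius. Since $L_s$ and $L_{s+1}$ are disjoint open sets and their two distinguished centres are exactly $2\pi$ apart, two equal-radius open disks placed there can only be disjoint when $\tfrac{|G'(w)|\,d}{4}\leq \pi$; rearranging, and using $|F'(z)|=1/|G'(w)|$, gives $|F'(z)|\geq (\Re(F(z))-\ln R_0)/(4\pi)$.

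The only real obstacle is upgrading $\geq$ to the strict inequality stated. I would obtain it via the strict form of Koebe's theorem: equality in the one-quarter estimate occurs only for Koebe functions, whose image is a slit plane, whereas here $G(D(w,d))\subset L_s$ omits the entire neighbouring tract $L_{s+1}$ and hence is not of that form. The Koebe inclusion is therefore strict, so $L_s$ contains $D(z,r)$ for some $r>|G'(w)|d/4$, and the disjointness argument applied to this larger radius still forces $r\leq\pi$, producing the strict inequality. The whole argument is short and essentially reproduces Lemma~1 of \cite{EL}; the geometric content is just Koebe plus the fact that translated Koebe disks in adjacent tracts cannot collide.
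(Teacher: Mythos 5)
Your proof is correct, and it is precisely the Koebe-plus-periodicity argument of Eremenko--Lyubich (Lemma~1 of \cite{EL}) that the paper invokes without reproducing: apply Koebe one-quarter to the branch $F^{-1}_s$ on the disk $D(F(z),\Re F(z)-\ln R_0)\subset H$, observe that the resulting Koebe disks in $L_s$ and $L_{s+1}=L_s+2\pi i$ are disjoint, and conclude that their common radius is at most $\pi$. Your extra care in deriving the strict inequality via the equality case of Koebe (ruling out a rotated Koebe function because $G(D(w,d))$ omits the open set $L_{s+1}$, not merely a slit) is sound, though in the paper's subsequent use of the bound that strictness is not actually needed.
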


\begin{cor} \label{cor:Re<} There exist $a, b > 0$, such that for
every $x \geq \ln R_0 + 1$ and $s \in \Z$,
\[
\Re(F^{-1}_s(x)) < a \ln x + b.
\]
\end{cor}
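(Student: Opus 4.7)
The plan is to integrate the upper bound on $|(F^{-1}_s)'|$ furnished by Lemma~\ref{lem:F'>} along the real axis in $H$.

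First, I would invert Lemma~\ref{lem:F'>}: for any $t > \ln R_0$ on the real axis and any $s \in \Z$, apply the lemma to $z = F^{-1}_s(t) \in L_s$, which gives $|F'(z)| > \frac{1}{4\pi}(t - \ln R_0)$, hence
\[
|(F^{-1}_s)'(t)| \;=\; \frac{1}{|F'(F^{-1}_s(t))|} \;<\; \frac{4\pi}{t - \ln R_0}.
\]

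Second, I would exploit the identity $F^{-1}_s = F^{-1}_0 + 2\pi i s$: the real part $\Re F^{-1}_s(t)$ does not depend on $s$, so it suffices to estimate $\Re F^{-1}_0(x)$. Writing
\[
\Re F^{-1}_0(x) - \Re F^{-1}_0(\ln R_0 + 1) \;=\; \Re \int_{\ln R_0 + 1}^{x} (F^{-1}_0)'(t)\, dt
\]
and bounding the integrand in modulus by the previous step, I get
\[
\Re F^{-1}_0(x) \;\leq\; \Re F^{-1}_0(\ln R_0 + 1) + \int_{\ln R_0 + 1}^{x} \frac{4\pi}{t - \ln R_0}\, dt \;=\; \Re F^{-1}_0(\ln R_0 + 1) + 4\pi \ln(x - \ln R_0).
\]

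Third, for $x \geq \ln R_0 + 1$ one has $\ln(x - \ln R_0) \leq \ln x + C(R_0)$ for a constant depending only on $R_0$, so setting $a = 4\pi$ and absorbing the finite constants into $b$ yields the claim. No serious obstacle arises; this is a routine integration of the derivative estimate, and the only mild care needed is the bookkeeping to ensure that $b$ absorbs both the boundary term $\Re F^{-1}_0(\ln R_0 + 1)$ and the $O(1)$ discrepancy between $\ln(x-\ln R_0)$ and $\ln x$.
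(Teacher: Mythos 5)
Your proof is correct and is essentially the same as the paper's: both invert Lemma~\ref{lem:F'>} to bound $|(F^{-1}_s)'(t)|$ by $4\pi/(t-\ln R_0)$ and integrate along $[\ln R_0+1,x]$ to obtain the logarithmic bound, with the boundary term absorbed into $b$. The only cosmetic difference is that the paper bounds $|F^{-1}_s(x)-F^{-1}_s(1+\ln R_0)|$ directly rather than passing to $F^{-1}_0$ first, but the two are the same argument.
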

\begin{proof} By Lemma~\ref{lem:F'>},
\[
|F^{-1}_s(x) - F^{-1}_s(1 + \ln R_0)| < 4 \pi \int_{1 + \ln R_0}^x
\frac{dt}{t - \ln R_0}  = 4 \pi \ln (x - \ln R_0).
\]
Hence, $\Re(F^{-1}_s(x)) < 4 \pi \ln (x - \ln R_0) + c_0$, where $c_0 =
\Re(F^{-1}_s(1 + \ln R_0))$ is independent of $s$, which implies the
assertion.
\end{proof}

We need one more simple observation:
\begin{lem}\label{Re>}
If $z \in \overline{H}$ and $z \to\infty$, then $\Re (F^{-1}_s(z))\to\infty$.
\end{lem}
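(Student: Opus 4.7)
The strategy is to combine the properness of $F$ on $\overline{L_s}$ with a Carath\'eodory extension argument for $\exp|_{L_s}$. Let $z_n \in \overline H$ with $z_n \to \infty$, and set $w_n := F_s^{-1}(z_n) \in \overline{L_s}$. Since $F$ maps $\overline{L_s}$ homeomorphically onto $\overline H$, it is in particular proper as a continuous bijection between closed subsets of $\C$, so $|w_n| \to \infty$. It therefore suffices to prove the following claim: for any sequence $(w_n) \subset \overline{L_s}$ with $|w_n| \to \infty$, one has $\Re(w_n) \to \infty$, or equivalently $|\exp(w_n)| \to \infty$.

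To prove this claim, observe that $\exp \colon L_s \to T$ is a conformal isomorphism between two Jordan domains of $\clC$, because both $\bd L_s \cup \{\infty\}$ and $\bd T \cup \{\infty\}$ are Jordan curves in the Riemann sphere. By Carath\'eodory's theorem, $\exp|_{L_s}$ extends to a homeomorphism of the closures in $\clC$. To identify where $\infty$ is sent, I would use the unboundedness of $T$: pick $\zeta_n \in T$ with $|\zeta_n| \to \infty$ and let $v_n \in L_s$ be the corresponding $\exp|_{L_s}$-preimages. Then $\Re(v_n) = \log|\zeta_n| \to \infty$, so $v_n \to \infty$ in $\clC$ while $\exp(v_n) = \zeta_n \to \infty$. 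Continuity of the extension then forces $\infty \mapsto \infty$. Applied to the sequence $(w_n)$, this yields $\exp(w_n) \to \infty$ and hence $\Re(w_n) \to \infty$, as required.

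The main technical obstacle is precisely this identification $\infty \mapsto \infty$ under the Carath\'eodory extension of $\exp|_{L_s}$: a priori the prime end of $L_s$ at infinity could correspond to a finite boundary point of $T$, and ruling this out is exactly where the unboundedness of the tract enters.
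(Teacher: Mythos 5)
Your proof is correct, but it takes a genuinely different route from the paper. The paper's argument is shorter and more elementary: it observes that for every $M>0$ the set $\overline{L_s}\cap\{z:\Re(z)\le M\}$ is compact, from which the lemma follows by continuity of $F$ on $\overline{L_s}$ (if $\Re(F^{-1}_s(z_n))$ remained bounded along a subsequence, those preimages would accumulate at a finite point $w^*\in\overline{L_s}$, forcing $z_n=F(F^{-1}_s(z_n))\to F(w^*)\in\overline{H}$, contradicting $z_n\to\infty$). Notice that your Step~1 (getting $|w_n|\to\infty$) is exactly this kind of argument applied with the trivially compact sets $\overline{L_s}\cap\{|z|\le M\}$; the paper's compactness observation with $\Re(z)\le M$ does the whole job in one stroke, eliminating the need for your Step~2.

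Your Step~2 replaces the paper's compactness observation with Carath\'eodory's extension theorem applied to $\exp\colon L_s\to T$ viewed as a conformal isomorphism between Jordan domains in $\clC$, followed by an identification of the image of the prime end at $\infty$. This is heavier machinery, but it is sound, and you correctly isolate the real content (ruling out that $\infty\in\bd_{\clC}L_s$ could correspond to a finite boundary point of $T$; this is where the unboundedness of $T$ is indispensable). In fact, your Carath\'eodory argument essentially \emph{re-proves} the paper's compactness claim, since that claim is equivalent to the statement that the extended homeomorphism sends $\infty$ to $\infty$. So both proofs are valid; the paper's is leaner and self-contained, while yours is more explicit about the underlying boundary-correspondence phenomenon at the cost of invoking Carath\'eodory.

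One small stylistic caveat: the phrase ``proper as a continuous bijection between closed subsets of $\C$'' is slightly misleading, since a continuous bijection between closed subsets of $\C$ need not be proper in general. What you actually use is that $F$ is a \emph{homeomorphism} onto $\overline{H}$ (hence proper as a map $\overline{L_s}\to\overline{H}$) together with the fact that $\overline{L_s}$ is closed in $\C$, so any bounded subsequence of $w_n$ would have a subsequential limit in $\overline{L_s}$, giving a finite limit for $z_n$. The conclusion is right; just phrase the justification via the homeomorphism, not the bijection.
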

\begin{proof}
It is sufficient to notice that for $M > 0$ the set
$\overline{L_s}\cap \{z: \Re (z)\le M\}$ is compact.
\end{proof}

To prove our theorem, we first construct a suitable Cantor repeller in the
logarithmic coordinates.

\begin{prop} \label{prop:J_B} There exists a compact set $J_B\subset
\bigcup_{s \in \Z}
L_s$ with $\dim_H(J_B)>1$, such that $F(J_B)\subset \bigcup_{s \in \Z}
L_s$ and $F^2(J_B)=J_B$.
In particular, the forward trajectory of every point in $J_B$ is well
defined and bounded.
Moreover, $(F^n)'(z)\to\infty$ as $n \to \infty$ for every $z\in J_B$.
\end{prop}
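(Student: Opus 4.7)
The plan is to realize $J_B$ as the attractor of an iterated function system (IFS) built from inverse branches of $F^2$ rooted near a carefully chosen large point~$x_0$.

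Fix $\varepsilon > 0$ small. Using Lemma~\ref{lem:F'<}, choose $x_0$ very large with $|(F^{-1}_s)'(x_0)| > x_0^{-1-\varepsilon}$ for every $s \in \Z$, and set $D := D(x_0, x_0/3) \subset H$. Since each $F^{-1}_s$ is univalent on the larger disk $D(x_0, x_0/2) \subset H$, the Koebe distortion theorem gives $|(F^{-1}_s)'|$ comparable (up to a universal constant) to $x_0^{-1-\varepsilon}$ throughout~$D$, uniformly in $s$. The images $F^{-1}_s(D) \subset L_s$ are then small, and pairwise disjoint since they are $2\pi i s$-translates of $F^{-1}_0(D)$. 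Enlarging $x_0$ if needed, Lemma~\ref{Re>} ensures $F^{-1}_t(D) \subset H$ for every $t$, so the compositions $G_{s,t} := F^{-1}_s \circ F^{-1}_t$ are well-defined univalent inverse branches of $F^2$ on~$D$.

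The core task is to select a finite $\Sigma \subset \Z^2$ such that the images $G_{s,t}(\overline D)$, $(s,t) \in \Sigma$, are pairwise disjoint and contained in $D$. The obstruction is geometric: Corollary~\ref{cor:Re<} bounds $\Re F^{-1}_t(x_0)$ only by a logarithm of $x_0$, so the naive second iterate would land at a doubly-logarithmic scale, far from $x_0$. The remedy is a large-$|t|$ mechanism: for $|t|$ large, $|F^{-1}_t(x_0)| \sim 2\pi|t|$ is dominated by its imaginary part, and integrating the bound $|(F^{-1}_s)'(\zeta)| \leq 4\pi/(\Re\zeta - \ln R_0)$ of Lemma~\ref{lem:F'>} along a path that is first horizontal and then vertical shows that $\Re G_{s,t}(x_0)$ can be pulled back to the scale of $x_0$ by a suitable choice of~$t$. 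The companion imaginary constraint $|\Im G_{s,t}(x_0)| \leq x_0/3$ then restricts $|s|$ in terms of~$x_0$. Balancing these two constraints produces an index set $\Sigma$ whose cardinality is large enough to dominate the reciprocal of the common contraction ratio of the $G_{s,t}$; disjointness of the images follows from the $2\pi i$-periodicity and the injectivity of $F^{-1}_s$.

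Setting $J_B := \bigcap_n \bigcup_{\omega \in \Sigma^n} G_{\omega_1} \circ \cdots \circ G_{\omega_n}(\overline D)$ yields a compact Cantor set contained in $\bigcup_s L_s$, with $F^2(J_B) = J_B$ and $F(J_B) \subset \bigcup_s L_s$. Forward orbits of points in $J_B$ remain inside the compact set $\overline D \cup F(\overline D)$, hence are bounded. A Moran/Bowen-type dimension count, using $|\Sigma|$ branches of known contraction ratio (and bounded distortion coming from Koebe), gives $\dim_H J_B > 1$ once $\varepsilon$ is small and $x_0$ is large. The uniform contraction of all $G_{s,t}$ on $\overline D$ immediately forces $(F^{2n})'(z) \to \infty$ on $J_B$, and combining with $F'(z) \neq 0$ yields $(F^n)'(z) \to \infty$ for all $z \in J_B$. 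The principal obstacle throughout is the return condition $G_{s,t}(D) \subset D$ together with the quantitative balance between $|\Sigma|$ and the contraction ratio; this is what compels the use of $F^2$ rather than $F$ (a single $F^{-1}$ cannot bring points back to large real part), and what makes the large-$|t|$ trick indispensable.
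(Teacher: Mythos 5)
Your overall strategy matches the paper's: pass to $F^2$, build a conformal IFS from two--fold compositions of inverse branches based at a large real point, and derive $\dim_H>1$ from a dimension estimate. However, there is a genuine gap at the decisive quantitative step, and one technical misstatement along the way.

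The misstatement: Lemma~\ref{lem:F'<} only gives the \emph{lower} bound $|(F^{-1}_s)'(x_0)|>x_0^{-1-\varepsilon}$; together with Lemma~\ref{lem:F'>} you only know $|(F^{-1}_s)'(x_0)|\in\bigl(x_0^{-1-\varepsilon},\,4\pi/(x_0-\ln R_0)\bigr)$, so ``comparable to $x_0^{-1-\varepsilon}$'' is not justified. More seriously, the second--stage derivatives $|(F^{-1}_s)'(F^{-1}_t(x_0))|$ have \emph{no} lower bound supplied by the lemmas (only the upper bound $\sim 4\pi/(r-\ln R_0)$, $r=\Re F^{-1}_t(x_0)$), and they genuinely vary with $t$. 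Consequently the branches $G_{s,t}$ do not share a ``common contraction ratio'' up to constants, and the Moran-type count ``$|\Sigma|$ dominates the reciprocal of the common contraction ratio'' has no content: counting branches against a single $\lambda$ does not bound $\sum_{(s,t)}|G_{s,t}'|$ from below when the individual derivatives are not mutually comparable. This sum is exactly what must exceed $1$ to conclude $P(1)>0$ and hence $\dim_H J_B>1$, and producing it is the heart of the paper's Lemma~\ref{pressure}: one lower-bounds $\sum_s|(F^{-1}_u)'(v_s)|$ by (a constant times) the \emph{length} of the curve $\ell_u=F^{-1}_u(\ell)$ inside a slightly shrunk square, which is $\gtrsim R$ because $\ell_u$ must cross the square; then one sums over $\gtrsim R$ admissible curves. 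Your sketch does not contain any substitute for this length estimate. A related soft point: integrating the Koebe bound of Lemma~\ref{lem:F'>} along an L-shaped path only gives an \emph{upper} bound on $|G_{s,t}(x_0)|$; to guarantee that the curve $t\mapsto F^{-1}_u(F^{-1}_t(x_0))$ actually reaches real part $\sim x_0$ (and hence crosses the target region) one also needs Lemma~\ref{Re>} together with Corollary~\ref{cor:Re<} (small real part at one end, real part $\to\infty$ at the other), as the paper uses. Without the length-based sum estimate, the proposal does not establish $\dim_H J_B>1$; the rest of your outline (disjointness by $2\pi i$-periodicity, boundedness of forward orbits, $(F^n)'\to\infty$ from finitely many uniformly contracting branches) is fine and agrees with the paper.
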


To prove this proposition, note first that by Lemma~\ref{lem:F'<},
there exists an arbitrarily large $R > 0$, such that
\begin{equation} \label{eq:eps}
|(F^{-1}_s)'(R)|> 1/R^{1+\varepsilon}
\end{equation}
for a small fixed $\varepsilon > 0$. Recall that all preimages
\[
v_s =F^{-1}_s(R)
\]
are
located on the same vertical line
\[
\ell=\{v:\Re (v)=r\}
\]
and, by Corollary~\ref{cor:Re<}, we have
\[
r=\Re(v_s) < a \ln R + b\ll R
\]
for large $R$. On the other hand, given $D>0$, and using Lemma~\ref{Re>}, we can
require that
\begin{equation}\label{re w}
r=\Re (v_s)>\ln R_0+ 2D,
\end{equation}
so, in particular, the line $\ell$ is contained in $H$.

Now let us consider all components of $F^{-1}(\ell)$, i.e. the sets
\[
\ell_u=F^{-1}_u(\ell)
\]
for $u \in \Z$. Obviously, $\ell_u\subset L_u$ and again, using
Corollary~\ref{cor:Re<}, we conclude that
\begin{equation} \label{eq:l_u}
\inf_{v\in \ell_u}\Re (v)< a\ln r+b\ll r\ll R,
\end{equation}
if $R$ is large. Denote
\[
v_{u,s}=F^{-1}_u(v_s)=F^{-1}_u \circ F^{-1}_s(R).
\]
and consider the square
\[
Q=\left[\frac{R}{2},\frac{3R}{2}\right]\times
\left[-\frac{R}{2},\frac{R}{2}\right]
\]
(so that $R$ is the center of $Q$ and the sides of $Q$ have length
$R$). See Figure~\ref{fig:square}.

\begin{figure}[!ht]
\begin{center}
\includegraphics*[height=8cm]{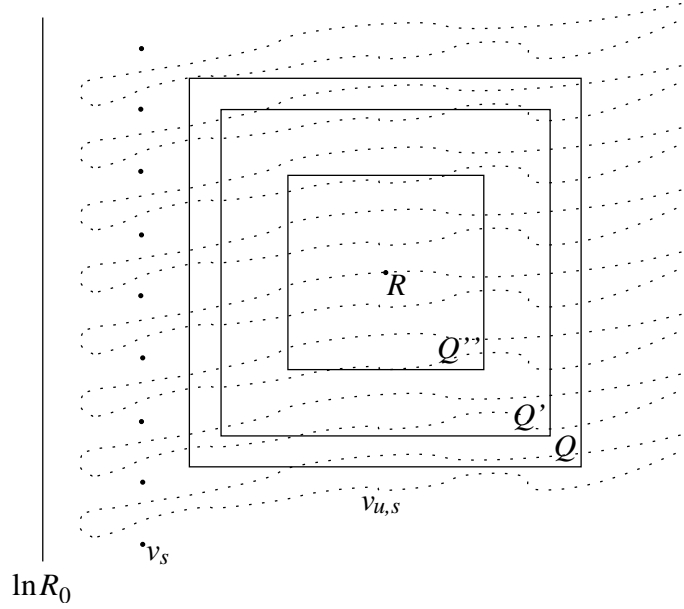}
\caption{The squares $Q, Q', Q''$.}
\label{fig:square}
\end{center}
\end{figure}

Next, let us consider the image of $Q$ under $F^{-1}_s$. By
Lemma~\ref{lem:F'>},
\begin{equation} \label{eq:4pi}
|(F^{-1}_s)'(R)| <\frac{4\pi}{R-\ln R_0},
\end{equation}
and by the Koebe Distortion Theorem (see e.g. \cite{CG}), the distortion
of $F_s^{-1}$ on $Q$ is universally bounded. We can thus write for
every $z\in Q$,
$$|(F^{-1}_s)'(z)| < C \frac{4\pi}{R-\ln R_0},$$
where $C > 1$ is the distortion constant. Since $\diam (Q)=\sqrt{2}R$, we have
$$\diam(F^{-1}_s(Q))\le\diam(Q) \frac{4C\pi}{R-\ln
R_0}=\frac{4\sqrt{2}\pi CR}{R-\ln R_0} <
5\sqrt{2}\pi C$$
for large $R$. Hence, by \eqref{re w} applied for $D = 5\sqrt{2}\pi C$, we get
$F^{-1}_s(Q)\subset H$, which implies that the inverse branches
$F^{-1}_u$ are defined in $F^{-1}_s(Q)$. Moreover, by
Lemma~\ref{lem:F'>} and \eqref{re w},
\[
|(F^{-1}_u)'(v_s)|< \frac{4\pi}{r - \ln R_0} < \frac{2\pi}{D} < 1,
\]
so using \eqref{eq:4pi}, we have the following estimate:
$$|(F^{-1}_u\circ F^{-1}_s)'(R)|<\frac{4\pi}{R-\ln R_0},$$
which implies, again by the Koebe Distortion Theorem, that
$$|(F^{-1}_u\circ F^{-1}_s)'(z)|< \frac{4\pi C}{R-\ln R_0}$$ for all $z\in Q$.
We conclude that, denoting
\[
Q_{u,s}=F^{-1}_u \circ F^{-1}_s(Q),
\]
we have
\begin{equation} \label{eq:diamQ_us}
\diam (Q_{u,s})<\diam(Q)\frac{4\pi C}{R-\ln R_0}
 = \frac{4\sqrt{2}\pi CR}{R-\ln R_0} < D.
\end{equation}

Now we consider the set $G$ of indices $(u,s)$ defined as follows:
$$G=\{(u,s)\in \mathbb{Z}^2: Q_{u,s} \subset Q\}.$$
We shall prove

\begin{lem}\label{pressure}
There exists a constant $C_1 > 0$, such that for every $z\in Q$ we have
\begin{equation}
\sum_{(u,s)\in G} |(F^{-1}_u\circ F^{-1}_s)'(z)|>C_1 R^{1-\varepsilon}
\end{equation}
\end{lem}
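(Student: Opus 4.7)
The plan is to reduce the sum to its value at $z=R$ via Koebe distortion, to factorize it using the translation symmetry $F^{-1}_u = F^{-1}_0 + 2\pi i u$, and then to estimate the resulting factors geometrically. Koebe distortion applied to the composition $F^{-1}_u\circ F^{-1}_s$ on $Q$ (valid because $Q$ has bounded hyperbolic diameter in $H$ and the composition extends conformally to a slight enlargement of $Q$) gives $|(F^{-1}_u\circ F^{-1}_s)'(z)|\asymp |(F^{-1}_u\circ F^{-1}_s)'(R)|$ uniformly in $z\in Q$ and in $(u,s)$. The chain rule together with the translation identity yields
\[
|(F^{-1}_u\circ F^{-1}_s)'(R)| = |(F^{-1}_0)'(v_s)|\cdot|(F^{-1}_0)'(R)|,
\]
which depends on $s$ but not on $u$. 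Invoking $|(F^{-1}_0)'(R)|>R^{-1-\varepsilon}$ from \eqref{eq:eps}, it suffices to show $\sum_s N(s)|(F^{-1}_0)'(v_s)|\gtrsim R^{2}$, where $N(s):=|\{u\in\Z:(u,s)\in G\}|$.

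To count $N(s)$, note that $v_{u,s}=v_{0,s}+2\pi i u$ are vertical translates, so $\Re(v_{u,s})$ does not depend on $u$. Since $\diam Q_{u,s}<D$ by \eqref{eq:diamQ_us}, a sufficient condition for $(u,s)\in G$ is that $v_{u,s}$ lie in the $D$-shrunken square $[R/2+D,3R/2-D]\times[-R/2+D,R/2-D]$. Setting $S^*:=\{s\in\Z:\Re(v_{0,s})\in[R/2+D,3R/2-D]\}$, for each $s\in S^*$ the vertical condition is met by at least $(R-2D)/(2\pi)-1\gtrsim R$ integers $u$, so it remains to prove $\sum_{s\in S^*}|(F^{-1}_0)'(v_s)|\gtrsim R$.

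For this I would parametrize the curve $\ell_0$ by $\beta(t):=F^{-1}_0(r+it)$, so that $|(F^{-1}_0)'(v_s)|=|\beta'(t_s)|$ with the $t_s$ equi-spaced by $2\pi$. Since $r-\ln R_0>2D\gg 2\pi$, Koebe distortion on disks of radius $\sim r-\ln R_0$ around points of $\ell$ shows $|\beta'|$ is comparable on intervals of length $2\pi$, giving $\sum_{s\in S^*}|\beta'(t_s)|\gtrsim\frac{1}{2\pi}\int_U|\beta'(t)|\,dt$, where $U:=\bigcup_{s\in S^*}[t_s-\pi,t_s+\pi]$. The bound $|\beta'|<4\pi/(r-\ln R_0)$ from Lemma~\ref{lem:F'>} yields $|\beta(t)-\beta(t_s)|<D$ for $|t-t_s|\le\pi$, and hence the inclusion $T:=\{t:\Re\beta(t)\in[R/2+2D,3R/2-2D]\}\subset U$. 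Now Lemma~\ref{Re>} gives $\Re\beta(t)\to\infty$ as $t\to\pm\infty$, while \eqref{eq:l_u} furnishes some $t^*$ with $\Re\beta(t^*)<a\ln r+b\ll R/2$; by the intermediate value theorem every value $y\in[R/2+2D,3R/2-2D]$ is attained at least twice by $\Re\circ\beta$, so the area formula in one variable gives
\[
\int_U|\beta'|\,dt \ge \int_T|\beta'|\,dt \ge \int_T|(\Re\beta)'(t)|\,dt = \int_{R/2+2D}^{3R/2-2D}n(y)\,dy \ge 2(R-4D),
\]
with $n(y):=|\{t:\Re\beta(t)=y\}|$. Combining the estimates yields $\sum_{(u,s)\in G}\gtrsim R^{-1-\varepsilon}\cdot R\cdot R = R^{1-\varepsilon}$, as required.

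The main obstacle is the last geometric step. Two successive $D$-shrinkages are made to pass from the containment $Q_{u,s}\subset Q$ to a pointwise condition on $v_{u,s}$, and then from $T$ to $U$; one must verify that after these shrinkages the intermediate value theorem still yields two separate crossings of the vertical strip by $\ell_0$ rather than a single one. This is secured by taking $R$ sufficiently large compared to $D$ and $\ln R_0$.
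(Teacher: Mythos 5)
Your proof is correct, and it follows the same underlying strategy as the paper's --- reduce to $z=R$ by bounded distortion, exploit the translation symmetry $F^{-1}_u=F^{-1}_0+2\pi iu$ to get the factorization $|(F^{-1}_u\circ F^{-1}_s)'(R)|=|(F^{-1}_0)'(v_s)||(F^{-1}_0)'(R)|$, count the number of ``good'' indices along one direction by vertical spacing, and lower-bound the remaining sum of derivative moduli by a length estimate on the preimage of the vertical line $\ell$. The difference is mostly organizational, plus one genuinely distinct technical step. The paper fixes $u$ with $\ell_u$ crossing a central subsquare $Q''$, takes the component $\ell_{u,Q'}$ of $\ell_u\cap Q'$ meeting $Q''$ (which must have length $\gtrsim R$ since it joins $\bd Q'$ to $\bd Q''$), and covers $\ell_{u,Q'}$ by the images $F^{-1}_u(B(v_s,2\pi))$ to bound the length by $\sum_{s\in G_u}|(F^{-1}_u)'(v_s)|$; it then observes there are $\gtrsim R$ admissible $u$. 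You instead fix $s$ with $\Re v_{0,s}$ in the shrunken range, count $\gtrsim R$ admissible $u$ by the $2\pi$-spacing (essentially Fubini on $G$ relative to the paper), and estimate $\sum_{s\in S^*}|(F^{-1}_0)'(v_s)|$ by a coarea/Banach-indicatrix argument on the single curve $\ell_0=\beta(\R)$: since $\Re\beta\to\infty$ at both ends and dips below $R/2$ by \eqref{eq:l_u}, every value in the target window is attained at least twice, giving $\int_T|(\Re\beta)'|\gtrsim R$. The paper's covering argument and your coarea argument serve the same purpose --- converting a length lower bound on a preimage of $\ell$ into a lower bound on a sum of $|(F^{-1}_0)'(v_s)|$ --- and both rely on the same distortion controls; the coarea route is arguably cleaner and avoids needing the auxiliary square $Q''$ and the ``connected component joining two boundaries'' argument, while the paper's covering argument avoids invoking the one-dimensional area formula. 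Minor care is warranted, as you note yourself, in making the two $D$-shrinkages compatible and verifying the crossing is indeed double rather than single, but this works out since $a\ln r+b\ll R/2$ for $R$ large and $\Re\beta$ attains its (finite) minimum at an interior point because it tends to $+\infty$ at both ends.
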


\begin{proof}
Let $Q''\subset Q'\subset Q$ be the following squares centered at $R$:
$$Q'=\left[\frac{R}{2} + D, \frac{3R}{2} - D\right] \times
\left[-\frac{R}{2} + D, \frac{R}{2} -D\right], \quad
Q''=\left[\frac{3R}{4}, \frac{5R}{4}\right] \times
\left[-\frac{R}{4}, \frac{R}{4}\right].$$
See Figure~\ref{fig:square}.

Let $\LL$ be the family of all curves $\ell_u$
intersecting $Q''$. For every $\ell_u \in \LL $, we denote by
$\ell_{u,Q'}$ a connected component of $\ell_u \cap Q'$, such that
$\ell_{u,Q'}\cap Q'' \neq\emptyset$. Note that every
$\ell_{u,Q'}$ intersects the boundaries of $Q'$ and $Q''$, so
\begin{equation} \label{eq:length>}
\length(\ell_{u,Q'}) \geq \frac{R}{4} - D.
\end{equation}
Now, consider the subset $G_u\subset G$ consisting of all $s\in
\mathbb{Z}$, such that $v_{u,s} \in Q'$. Note that by
\eqref{eq:diamQ_us},
\[
\{(u, s): s \in G_u\} \subset G.
\]
Moreover, $\ell_{u,Q'} \subset F^{-1}_u(\bigcup_{s\in G_u} B(v_s, 2\pi))$
and the distortion of $F^{-1}_u$ is uniformly bounded on every
ball $B(v_s, 2\pi)$. Hence,
\begin{equation} \label{eq:length<}
\length(\ell_{u,Q'}) < C_2 \sum_{s\in G_u} |(F^{-1}_u)'(v_s)|
\end{equation}
for some constant $C_2 > 0$. Using \eqref{eq:eps},
\eqref{eq:length>}, \eqref{eq:length<} and the fact that the distortion
of $F^{-1}_u\circ F^{-1}_s$ is universally bounded on $Q$, we get
\begin{multline*}
\sum_{s\in G_u} |(F^{-1}_u\circ F^{-1}_s)'(z)|>
\frac{1}{C} \sum_{s\in G_u} |(F^{-1}_u\circ F^{-1}_s)'(R)|
\\> \frac{1}{CR^{1 + \varepsilon}} \sum_{s\in
G_u}|(F^{-1}_u)'(v_s)|>\frac{1}{CC_2}\frac{R/4 - D}{R^{1 +
\varepsilon}} > \frac{C_3}{R^\varepsilon},
\end{multline*}
for some constant $C_3 > 0$, if $R$ is large. Moreover, using
\eqref{eq:l_u}, Lemma~\ref{Re>} and the fact $\ell_u
= \ell_0 + 2 \pi i$, we see that the family $\LL$ contains at least
$R/(4\pi)$ curves, so finally we get
\[
\sum_{(u,s)\in G} |(F^{-1}_u\circ F^{-1}_s)'(z)|\ge \sum_{\{u: \ell_u
\in \LL\}}\sum_{s\in
G_u} |(F^{-1}_u\circ F^{-1}_s)'(z)| >  \frac{C_3}{4\pi} R^{1 - \varepsilon}.
\]
\end{proof}

Now, we are in a position to prove Proposition~\ref{prop:J_B}.
Let us consider the collection of open, simply connected sets
$\inter Q_{u, s}$, where $(s,t)\in G$. It follows from our
assumptions that their closures are pairwise disjoint and, by
\eqref{eq:diamQ_us} and the definition of $G$,
$Q_{u, s}\subset Q$. In this way we get a conformal
Iterated Function System, formed by strictly contracting, conformal
maps $F^{-1}_u\circ F^{-1}_s: Q\to Q$. Such a system has a unique
compact invariant set $J_B$. Since the $Q_{u, s}$ are pairwise disjoint,
$J_B$ is a Cantor set. By construction, $F(J_B)\subset \bigcup_{s \in \Z}
L_s$, $F^2(J_B)=J_B$ and $(F^n)'(z) \to \infty$ for every $z \in J_B$.
It is well known that the Hausdorff dimension
of $J_B$ is determined as the unique $t\in \R$, such that
$P(t)=0$, where $P(t)$ is so-called pressure function (see
e.g. \cite{PU}). Let us recall the definition of the pressure
function in our setting:
$$P(t)=\lim_{n\to\infty}\frac{1}{n}\log\sum_{g^n}\|(g^n)'\|^t.$$
Here, we sum up over all possible compositions $g^n=
g_{i_1}\circ\dots\circ
g_{i_n}$, where $g_{i_j}=F^{-1}_u\circ F^{-1}_s$ for some $(u,s)\in G$.

Since the system is expanding, the function $t\mapsto P(t)$ is
strictly decreasing and in order to prove $\dim_H(J_B)>1$ it is enough
to show that $P(1)>0$.
Using Lemma~\ref{pressure} we can estimate the pressure $P(1)$ as follows:
\begin{multline*}
P(1)=\lim_{n\to\infty}\frac{1}{n}\log\sum_{g^n}\|(g^n)'\|
\\\ge \lim_{n\to\infty}\frac{1}{n}\log\left( \inf_{z\in
Q}\sum_{(u,s)\in G} |(F^{-1}_u\circ F^{-1}_s)'(z)|\right )^n
\ge
\log (C_1R^{1-\varepsilon})> 0
\end{multline*}
for $\varepsilon < 1$ and $R$ sufficiently large. Therefore, $\dim_H(J_B)>1$.
This ends the proof of Proposition~\ref{prop:J_B}.

Proposition~\ref{prop:J_B} immediately implies the following theorem, which
proves our main result.

\begin{thm}
There exists a conformal expanding Cantor repeller $X \subset J(f)$
with the Hausdorff dimension greater than $1$.
\end{thm}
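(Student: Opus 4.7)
The plan is to obtain $X$ by projecting the logarithmic-coordinate Cantor set $J_B$ of Proposition~\ref{prop:J_B} back to the dynamical plane via the exponential map. Set $X = \exp(J_B)$. Iterating the conjugacy $\exp \circ F = f \circ \exp$ gives $\exp \circ F^2 = f^2 \circ \exp$, so $F^2(J_B) = J_B$ immediately yields $f^2(X) = X$; thus $X$ is compact and forward invariant under $f^2$.

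For the expansion, differentiating the conjugacy gives
\[
(f^n)'(\exp w) \;=\; (F^n)'(w) \cdot \exp\bigl(F^n(w) - w\bigr).
\]
For even $n$, both $w$ and $F^n(w)$ lie in $J_B \subset Q$, so the real parts of $w$ and $F^n(w)$ are each confined to $[R/2, 3R/2]$, and the exponential factor is bounded above and below by positive constants depending only on $R$. On the other hand, \eqref{eq:diamQ_us} shows that the inverse branches $F^{-1}_u \circ F^{-1}_s$ are strict contractions on $Q$, so $|(F^{2k})'|$ grows exponentially in $k$ uniformly on $J_B$. Choosing $k$ large enough yields $|(f^{2k})'|_X| > 1$.

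For the Hausdorff dimension, I use that $\exp$ restricted to the bounded set $Q$ is locally bi-Lipschitz, since $|\exp w| = e^{\Re w}$ is bounded above and below there. Locally bi-Lipschitz maps preserve Hausdorff dimension on sets admitting a finite cover by such local pieces, hence $\dim_H(X) = \dim_H(J_B) > 1$. For the Cantor property, observe that any two distinct $\exp$-preimages in $J_B$ of a given point of $X$ differ by a nonzero multiple of $2\pi i$; iterating \eqref{eq:diamQ_us}, $J_B$ is covered by IFS pieces of arbitrarily small diameter, so $\exp|_{J_B}$ is a local homeomorphism onto $X$. The properties of being perfect and totally disconnected are local and transfer from $J_B$ to $X$.

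Finally, $X \subset J(f)$ follows from the standard principle that uniform expansion precludes membership in the Fatou set: if some $z \in X$ lay in $F(f)$, then $\{f^n\}$ would be normal on a neighborhood of $z$, and any normal limit would have bounded derivative at $z$, contradicting $|(f^{2kn})'(z)| \to \infty$. I expect the most delicate point to be ensuring that $X$ genuinely remains a Cantor set after projection, since $\exp$ is not globally injective on $Q$; this is resolved by the diameter estimate \eqref{eq:diamQ_us}, which makes local injectivity of $\exp|_{J_B}$ automatic.
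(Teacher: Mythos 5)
Your projection strategy is the same as the paper's, but you project the wrong set. You take $X=\exp(J_B)$, which is invariant only under $f^2$, not under $f$: since in general $F(J_B)\neq J_B$ (Proposition~\ref{prop:J_B} only gives $F^2(J_B)=J_B$), one gets $f(X)=\exp(F(J_B))\neq X$. The paper's definition of a conformal expanding Cantor repeller requires a compact \emph{$f$-invariant} Cantor set, so your $X$ does not qualify as stated. The fix is exactly what the paper does: take $X=\exp\bigl(J_B\cup F(J_B)\bigr)$. Then $J_B\cup F(J_B)$ is $F$-invariant (because $F(J_B\cup F(J_B))=F(J_B)\cup F^2(J_B)=F(J_B)\cup J_B$), it is compact since $F$ is continuous on a neighborhood of $J_B$, and the conjugacy gives $f(X)=X$. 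The dimension estimate is unaffected, since $\dim_H(X)\ge\dim_H(\exp(J_B))=\dim_H(J_B)>1$.

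Two smaller remarks. First, in the argument that $X\subset J(f)$, ruling out the possibility that a normal limit is the constant $\infty$ requires observing that the orbit $f^n(z)$ stays bounded (this is where Proposition~\ref{prop:J_B}'s statement about bounded trajectories is actually used); you pass over this silently by assuming every normal limit is a finite holomorphic function. Second, your derivation of uniform expansion from \eqref{eq:diamQ_us} and the conjugacy formula $(f^n)'(\exp w)=(F^n)'(w)\exp(F^n(w)-w)$ is fine in spirit, but you could simply invoke $(F^n)'(z)\to\infty$ on $J_B$, which Proposition~\ref{prop:J_B} already provides and which is what the paper uses directly. The local bi-Lipschitz and local-injectivity-of-$\exp$ arguments for dimension preservation and the Cantor property are correct and are a slightly more explicit version of the paper's remark that $\exp$ is a smooth covering.
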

\begin{proof}
Let $X =\exp (J_B \cup F(J_B))$. Since $\exp\circ F=f\circ \exp$ and
$J_B \cup F(J_B)$ is $F$-invariant, $X$ is $f$-invariant. Moreover,
since $\exp$ is a smooth covering,
$\dim_H(X)=\dim_H(J_B)>1$. Finally, since $\lim_{n\to\infty}
(F^n)'(z)=\infty$ for $z\in J_B$,
we conclude that also $\lim_{n\to\infty} (f^n)'(z)=\infty$ for $z\in
X$. On the other hand, the trajectory $f^n(z)$ is bounded.
Therefore, the family of iterates $\{f^n\}$ cannot be normal in any
neighbourhood of $z \in X$. Consequently, $X\subset J(f)$.
\end{proof}

\end{document}